\theoremstyle{definition}
\newtheorem{definition}{Definition}[section]
\newtheorem{remark*}{Remark}
\newtheorem{remark}{Remark}[section]
\newtheorem*{claim*}{Claim}
\theoremstyle{remark}
\theoremstyle{plain}
\newtheorem{theorem}{Theorem}[section]
\newtheorem{corollary}[theorem]{Corollary}
\newtheorem{prop}[theorem]{Proposition}
\newtheorem{lemma}[theorem]{Lemma} 
\newtheorem{question}[theorem]{Question}
\def\dom{\mathop{\mathrm{Dom}}\nolimits}
\def\range{\mathop{\mathrm{Range}}\nolimits}
\def\Aut{\mathop{\mathrm{Aut}}\nolimits}
\def\Forb{\mathop{\mathrm{Forb}}\nolimits}
\def\str#1{\mathbf {#1}}
\def\K{{\mathcal K}}
\def\Fraisse{Fra\"{\i}ss\' e}
\begin{document}
\bibliographystyle{alpha}

\title{EPPA for two-graphs and antipodal metric spaces}

\authors{
\author[D. M. Evans]{David M. Evans}

\address{%
Department of Mathematics\\
Imperial College London\\
London SW7~2AZ\\
UK.}
\email{david.evans@imperial.ac.uk}
\author[J. Hubi\v cka]{Jan Hubi\v cka}
\address{Department of Applied Mathematics (KAM)\\ Charles University\\ Prague, Czech Republic}
\email{hubicka@kam.mff.cuni.cz}
\author[M. Kone\v cn\'y]{Mat\v ej Kone\v cn\'y}
\address{Charles University\\ Prague, Czech Republic}
\email{matej@kam.mff.cuni.cz}
\author[J. Ne\v set\v ril]{Jaroslav Ne\v set\v ril}

\address{Computer Science Institute of Charles University (IUUK)\\ Charles University\\ Prague, Czech Republic}
\email{nesetril@kam.mff.cuni.cz}
\thanks{The last three authors are supported by ERC Synergy grant DYNASNET 810115. Jan Hubi\v cka and Mat\v ej Kone\v cn\'y are supported by project 18-13685Y of the Czech Science Foundation (GA\v CR). Mat\v ej Kone\v cn\'y is also supported  by the Charles University Grant Agency (GA UK), project 378119.}

\subjclass[2010]{Primary 05E18, 20B25, 22F50, 03C15, 03C52}
}

\begin{abstract}
We prove that the class of finite two-graphs has the extension property for partial
automorphisms (EPPA, or Hrushovski property), thereby answering a question of
Macpherson. In other words, we show that the class of graphs
has the extension property for switching automorphisms. We present a short, self-contained,
purely combinatorial proof which also proves EPPA for the class of integer valued antipodal metric spaces
of diameter 3, answering a question of Aranda et al.

The class of two-graphs is an important new example which behaves differently from all the
other known classes with EPPA: Two-graphs do not have the amalgamation
property with automorphisms (APA), their Ramsey expansion has to add a graph,
it is not known if they have coherent EPPA and even EPPA itself cannot be proved using the
Herwig--Lascar theorem.
\end{abstract}

\maketitle


\section{Introduction}
{\em Two-graphs}, introduced by G. Higman and studied extensively since the 1970s~\cite{Cameron1999,Seidel1973}, are 3-uniform
hypergraphs with the property that on every four vertices there is an even
number of hyperedges. 
A class $\mathcal C$ of finite structures (such as hypergraphs) has the \emph{extension property for partial automorphisms} (\emph{EPPA}, sometimes also called \emph{Hrushovski property}) if for every $\str A\in \mathcal C$ there exists $\str B\in \mathcal C$ containing $\str A$ as an (induced) substructure such that every isomorphism between substructures of $\str A$ extends to an automorphism of $\str B$. We call $\str B$ an \emph{EPPA-witness} for $\str A$.
We prove:
\begin{theorem}\label{thm:twographeppa}
The class $\mathcal T$ of all finite two-graphs has EPPA.
\end{theorem}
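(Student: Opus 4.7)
The plan is to prove EPPA for two-graphs by exploiting the Seidel correspondence with switching classes of graphs, combined with a construction adapted from Hrushovski's EPPA for graphs.

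\textbf{Step 1 (Reduction via switching classes).} A two-graph $\str A$ on $V$ corresponds bijectively to a switching class $[G]$ of simple graphs on $V$, where switching at $S\subseteq V$ toggles each edge between $S$ and $V\setminus S$. Under this identification, once a representative $G$ is fixed, a partial two-graph automorphism $X\to Y$ corresponds to a pair $(\varphi,S)$ with $\varphi:X\to Y$ a bijection and $S\subseteq Y$ a switching set, such that $G[X]$ is isomorphic via $\varphi$ to the switching of $G[Y]$ at $S$. Call these \emph{partial switching automorphisms} (PSAs) of $G$. EPPA for $\str A$ becomes: find a finite graph $H$ with $V(H)\supseteq V$ and $H[V]=G$, such that every PSA of $G$ extends to a \emph{switching automorphism} of $H$ --- a permutation of $V(H)$ which, combined with a switching at some $T\subseteq V(H)$, fixes $H$.

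\textbf{Step 2 (Construction of $H$).} I would construct $H$ as a ``twisted wreath product'': take $V(H)=V\times K$ for $K=\mathbb{F}_2^V$ and declare
\[
\{(u,\chi),(v,\chi')\}\in E(H)\iff \chi(v)+\chi'(u)+\mathbf{1}[uv\in E(G)]\equiv 1\pmod 2,
\]
so that the layer $V\times\{0\}$ is an induced copy of $G$, and translation by any $\chi_0\in K$ in the second coordinate is a switching automorphism of $H$, switching at $\{u\in V:\chi_0(u)=1\}\times K$. To handle PSAs $(\varphi,S)$ with nontrivial $\varphi$, I would first enlarge $G$ to an ordinary EPPA-witness $G^*$ for graphs via Hrushovski's theorem, and then apply the wreath-product construction to $G^*$. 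A PSA $(\varphi,S)$ of $G$ is then extended by composing a graph automorphism of $G^*$ extending $\varphi$ with the translation by $\mathbf{1}_S\in\mathbb{F}_2^{V(G^*)}$ (extending $S$ by zeros outside $Y$).

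\textbf{Step 3 (Verification and obstacles).} I would verify (a) $\str A$ embeds as an induced sub-two-graph of the two-graph of $H$; (b) $H$'s associated two-graph genuinely extends $\str A$; (c) every PSA of $G$ extends to an automorphism of this two-graph as described. The main obstacle, and the reason the introduction notes that Herwig--Lascar fails here, is that the two-graph axiom (even number of hyperedges on every $4$-set) is a global parity constraint not captured by forbidden finite sub-hypergraphs in the usual sense. Consequently, the edge rule of $H$ must be designed so that the switching-parity structure is preserved under arbitrary extensions, and in particular so that graph automorphisms of $G^*$ produced by Hrushovski's theorem mesh compatibly with the translation action of $K$. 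I expect the hardest part to be ensuring that for every PSA a genuine global switching witness $T\subseteq V(H)$ can be found, rather than merely a local one; if the naive twisted product above does not immediately suffice, the resolution should come from replacing $\mathbb{F}_2^V$ by a more carefully chosen $2$-group representing the full group of switchings together with a permutation action, and letting Hrushovski-style coset arguments produce the required extensions.
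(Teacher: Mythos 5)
Your overall architecture -- reducing two-graph EPPA to an extension property for switching automorphisms of graphs, and building the witness as a ``vertex $\times$ valuation-function'' twisted product with an adjacency rule of the form $\chi(v)+\chi'(u)+\cdots$ -- is essentially the same as the paper's (which works with antipodal metric spaces of diameter $3$, i.e.\ double covers, but the construction of the witness $\str B$ out of pairs $(e,\chi)$ with $\chi\colon M\to\{0,1\}$ is exactly your twisted product in disguise). The translations by elements of $\mathbb{F}_2^V$ do give switching automorphisms, as you say, and they handle the case $\varphi=\mathrm{id}$.

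The genuine gap is in Step 2, in how you extend a PSA $(\varphi,S)$ with nontrivial $\varphi$. You propose to take ``a graph automorphism of $G^*$ extending $\varphi$'' via Hrushovski and then compose with the translation by $\mathbf{1}_S$. But $\varphi$ is an isomorphism from $G[X]$ onto the \emph{switching} of $G[Y]$ at $S$, not onto $G[Y]$ itself; it is therefore not a partial automorphism of the graph $G$ (or of $G^*$), and Hrushovski's theorem gives you nothing to extend. Composing afterwards with a fibre translation cannot repair this, because the translation acts only on the second coordinate while the mismatch $\mathbf{1}[\varphi(u)\varphi(v)\in E]\neq\mathbf{1}[uv\in E]$ lives in the base. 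You sense this in Step 3 but the proposed remedy (``a more carefully chosen $2$-group'' plus ``Hrushovski-style coset arguments'') is not a proof. The paper's resolution is different and does not invoke Hrushovski at all: since the base set ($M$, the set of antipodal edges, or in your coordinates $V$) carries no structure that the extension must preserve, one extends the induced partial permutation of the base \emph{arbitrarily} (order-preservingly, for coherence), and then absorbs every adjacency discrepancy into the valuation functions via a symmetric set $F$ of ``flipped pairs'': $\theta((e,\chi))=(\hat\varphi(e),\xi)$ with $\xi(\hat\varphi(f))=\chi(f)$ or $1-\chi(f)$ according to whether $\{e,f\}\in F$. Verifying that this $\theta$ is an automorphism, and that $F$ is well defined on the embedded copy (there are zero or two admissible $\chi$'s for each $e\in\pi(\dom\varphi)$ and they agree), is the real content of the proof and is exactly what your sketch is missing. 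As a bonus, this self-contained argument reproves Hrushovski's theorem rather than relying on it.
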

Our result answers a question of Macpherson which is also stated in Siniora's PhD
thesis~\cite{Siniora2} and can be seen as a contribution to the ongoing effort
of identifying new classes of structures with EPPA. This was started in 1992 by 
Hrushovski's proof~\cite{hrushovski1992} that the class of all finite
graphs has EPPA, and followed by a series of papers dealing with other classes, including~\cite{Aranda2017,Conant2015,Herwig1995,herwig1998,herwig2000,hodkinson2003,Hubicka2018metricEPPA,Hubicka2017sauer,Konecny2018b,otto2017,solecki2005,vershik2008}.

All proofs of EPPA in this paper are purely combinatorial and self-contained.
The second part of the paper requires some model-theoretical notions and discusses in more detail the interplay of the following properties for which there were no known examples before:
\begin{enumerate}
\item The usual procedure for building an EPPA-witness is to construct an incomplete
object (where some relations are missing) and later complete it to satisfy axioms
of the class without affecting any automorphisms (i.e. one needs to have
an \emph{automorphism-preserving completion}~\cite{Aranda2017}). This is not
possible for two-graphs and thus makes them related to tournaments which pose a
well known open problem in the area, see Remark~\ref{rem:noherwig}.

\item The class $\mathcal T$ does not have APA (\emph{amalgamation property with automorphisms}). Hodges, Hodkinson, Lascar, and Shelah~\cite{hodges1993b} introduced this notion and showed that APA together with EPPA imply the existence of \emph{ample generics} (see also~\cite[Chapter~2]{Siniora2}). To the authors' best knowledge, $\mathcal T$ is the only known class with EPPA but not APA besides pathological examples, see Section~\ref{sec:apa}.

\item In all cases known to the authors except for the class of all finite groups, whenever a class of structures $\mathcal C$ has EPPA then expanding a variant of $\mathcal C$ by linear orders gives a Ramsey expansion. This does not seem to be the case for two-graphs, see Section~\ref{sec:ramsey}.

\item Solecki and Siniora~\cite{Siniora,solecki2009} introduced the notion of \emph{coherent EPPA} (see Section~\ref{sec:coherence}) as a way to prove that the automorphism group of the respective \Fraisse{} limit contains a dense locally finite subgroup. Our method does not give coherent EPPA for $\mathcal T$ and thus it makes $\mathcal T$ the only known example with EPPA for which coherent EPPA is not known. However, our method does give coherent EPPA for the class of all antipodal metric spaces of diameter 3 and using it we are able to obtain a dense locally finite subgroup of the \Fraisse{} limit of $\mathcal T$, see Section~\ref{sec:groupmagic}.
\end{enumerate}

\medskip

Two-graphs are closely related to the switching classes of graphs and to double covers of complete graphs~\cite{Cameron1999,Seidel1973}, which is in fact key in this paper. Our results can thus be interpreted as a direct strengthening of the
theorem of Hrushovski~\cite{hrushovski1992} which states that the class of
all finite graphs $\mathcal G$ has EPPA. Namely we can consider $\mathcal G$
with a richer class of mappings --- the switching automorphisms.

Given a graph $\str{G}$ with vertex set $G$ and $S \subseteq G$, the
\emph{(Seidel) switching} $\str{G}_S$ of $\str{G}$ is the graph  created from $\str{G}$
by complementing the edges between $S$ and $G\setminus S$. (That is, for $s\in S$ and $t\in G\setminus S$ it holds that $\{s,t\}$ is an edge of $\str G_S$ if and only if $\{s,t\}$ is not an edge of $\str G$. Edges and non-edges with both endpoints in $S$ or $G\setminus S$ are preserved.)

Given a graph $\str{H}$ with vertex set $H$, a 
function $f\colon G\to H$ is a \emph{switching isomorphism} of $\str{G}$ and
$\str{H}$ if there exists $S\subseteq G$ such that $f$ is an isomorphism of 
$\str{G}_S$ and $\str{H}$. If $\str{G}=\str{H}$ we call such a function a
\emph{switching automorphism}.

\begin{definition}\label{def:eppsa}
We say that a class $\mathcal C\subseteq \mathcal G$ has the
\emph{extension property for switching automorphisms} if for every $\str{G}\in
\mathcal C$ there exists $\str {H}\in \mathcal C$ containing $\str G$ as an
induced subgraph such that every switching isomorphism of induced subgraphs of
$\str G$ extends to a switching automorphism of $\str H$
and, moreover, every isomorphism of induced subgraphs of $\str{G}$ extends to an automorphism of $\str H$.
\end{definition}

In this language we prove:
\begin{theorem}
\label{thm:switchings}
The class of all finite graphs $\mathcal G$ has the extension property for switching automorphisms.
\end{theorem}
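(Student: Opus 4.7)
The plan is to deduce Theorem~\ref{thm:switchings} from Theorem~\ref{thm:twographeppa} together with Hrushovski's theorem that the class $\mathcal G$ of finite graphs has (ordinary) EPPA. The key bridge is the classical correspondence between switching classes of graphs and two-graphs on a fixed vertex set: two graphs on a common vertex set have the same two-graph if and only if they differ by a Seidel switching, and consequently a bijection $f\colon A\to B$ between vertex sets of induced subgraphs of $\str G$ is a switching isomorphism exactly when it is an isomorphism of the induced sub-two-graphs, while a permutation of $V(\str H)$ is a switching automorphism of $\str H$ exactly when it is an automorphism of $T(\str H)$.

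First I would apply Hrushovski's theorem to $\str G$ to obtain a finite graph $\str H_0\supseteq \str G$ (as an induced subgraph) in which every isomorphism between induced subgraphs of $\str G$ extends to an ordinary automorphism of $\str H_0$. Next I would apply Theorem~\ref{thm:twographeppa} to the two-graph $T(\str H_0)$ to produce an EPPA-witness $\str T'$ on some vertex set $V'\supseteq V(\str H_0)$. Finally I would pick any graph $\str H$ on $V'$ with $T(\str H)=\str T'$, and observe that $\str H|_{V(\str H_0)}$ has the same two-graph as $\str H_0$, hence is obtained from $\str H_0$ by switching on some $S\subseteq V(\str H_0)$; replacing $\str H$ by $\str H_S$ (which does not change $T(\str H)$) arranges $\str H|_{V(\str H_0)}=\str H_0$, so that $\str G\subseteq \str H_0\subseteq \str H$ as induced subgraphs.

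The first condition of Definition~\ref{def:eppsa} is then almost immediate: any switching isomorphism between induced subgraphs of $\str G$ is also an isomorphism between induced sub-two-graphs of $T(\str G)\subseteq T(\str H_0)$, so by Theorem~\ref{thm:twographeppa} it extends to a two-graph automorphism of $\str T'$, equivalently to a switching automorphism of $\str H$.

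The main obstacle is the second condition, which requires ordinary isomorphisms between induced subgraphs of $\str G$ to extend to \emph{ordinary} (not just switching) automorphisms of $\str H$. Such an isomorphism extends to an ordinary automorphism $f_0$ of $\str H_0$, and $f_0$ lifts through Theorem~\ref{thm:twographeppa} to a two-graph automorphism $\tilde f$ of $\str T'$; viewed as a switching automorphism of $\str H$ it has some switching set $T\subseteq V'$, and since $\tilde f$ restricts on $V(\str H_0)$ to the ordinary automorphism $f_0$ of $\str H_0$, one automatically has $T\cap V(\str H_0)\in\{\emptyset,V(\str H_0)\}$. Controlling $T$ on the complement $V'\setminus V(\str H_0)$ is where I expect the real work to lie: one likely needs either to choose the lift $\str H$ of $\str T'$ (among the $2^{|V'|-1}$ possible graphs with two-graph $\str T'$) so that the finitely many ordinary automorphisms of $\str H_0$ lift simultaneously to ordinary automorphisms of $\str H$, or to perform an additional auxiliary construction (for instance a further application of Hrushovski's theorem) that absorbs any residual switching on $V'\setminus V(\str H_0)$ into genuine graph automorphisms.
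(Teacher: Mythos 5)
Your proposal has two problems, one structural and one mathematical. Structurally, you deduce Theorem~\ref{thm:switchings} from Theorem~\ref{thm:twographeppa}, but in the paper the implication runs the other way: EPPA for two-graphs is obtained as a corollary of the extension property for switching automorphisms, which in turn is derived from an explicit EPPA construction for antipodal metric spaces of diameter~3 (Proposition~\ref{prop:tmain}). So as it stands your argument is circular; you would need an independent proof of EPPA for two-graphs, and producing one is essentially the whole difficulty of the paper (two-graphs do not admit automorphism-preserving completions, so the standard Herwig--Lascar machinery does not apply; see Remark~\ref{rem:noherwig}).

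Mathematically, even granting EPPA for two-graphs as a black box, the `moreover' clause of Definition~\ref{def:eppsa} is not established, and you say so yourself. Your analysis up to that point is correct: an ordinary automorphism $f_0$ of $\str H_0$ lifts to an automorphism of $\str T'$, i.e.\ a switching automorphism of $\str H$ whose switching set $T$ satisfies $T\cap V(\str H_0)\in\{\emptyset,V(\str H_0)\}$, but nothing controls $T$ on $V'\setminus V(\str H_0)$. Neither of your two suggested fixes is straightforward: choosing the lift $\str H$ of $\str T'$ so that all (finitely many) automorphisms of $\str H_0$ lift to ordinary automorphisms of $\str H$ amounts to finding a graph in the switching class of $\str T'$ that is invariant as a graph under a prescribed family of switching automorphisms, and there is no reason such a graph should exist in general; and a further application of Hrushovski's theorem only ever extends ordinary partial isomorphisms, so it cannot absorb a residual switching. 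The paper resolves exactly this point by working with the antipodal double cover $\str A$ of $\str G$ on $G\times\{0,1\}$: the EPPA witness built in Proposition~\ref{prop:tmain} carries a two-colouring $\hat p$ extending the pode function $p$, with the property that $p$-preserving partial automorphisms extend to $\hat p$-preserving automorphisms. The witness graph $\str H$ is then the distance-$1$ graph on $\hat p^{-1}(0)$; ordinary partial isomorphisms of $\str G$ correspond to $p$-preserving partial automorphisms of $\str A$ and hence extend to genuine automorphisms of $\str H$, while arbitrary switching isomorphisms correspond to arbitrary partial automorphisms of $\str A$ and extend to switching automorphisms of $\str H$. That extra invariant $\hat p$ is precisely what your approach is missing, and it is the reason the proof goes through antipodal metric spaces rather than through two-graphs.
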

Because of the `moreover' part of Definition~\ref{def:eppsa}, Theorem~\ref{thm:switchings}
implies the theorem of Hrushovski. It is also a strengthening of Theorem~\ref{thm:twographeppa}
by the following well-known correspondence between two-graphs and switching classes~\cite{Cameron1999,Seidel1973}.

\medskip

Given a graph $\str{G}$, its \emph{associated two-graph} $T(\str{G})$ is a two-graph on the
same vertex set as $\str{G}$ such that $\{a,b,c\}$ is a hyperedge if and only if the three-vertex subgraph induced by $\str G$ on $\{a,b,c\}$ has an odd number of edges.  Then a function $f\colon G \to H$ between graphs $\str{G}$ and $\str H$ is a switching isomorphism  if and only if it is an isomorphism between the associated two-graphs. Thus, the existence of a switching isomorphism
is an equivalence relation on the class of graphs and two-graphs correspond to the 
equivalence classes (called \emph{switching classes of graphs}).

\medskip

We shall see that the most natural setting for our proof is to work with the class of all finite
integer-valued antipodal metric spaces of diameter 3.
Following~\cite{Amato2016} we call a metric space an \emph{integer-valued metric space of diameter 3} if the distance of
every two distinct points is 1, 2 or 3. It is \emph{antipodal} if
\begin{enumerate}
 \item it contains no triangle with distances $2,2,3$, and
 \item the edges with label $3$ form a perfect matching (in other words, for
       every vertex there is precisely one \emph{antipodal vertex} at distance 3).
\end{enumerate}

We require that the domain and image of a partial automorphism of such a structure should be closed under taking antipodal points. However, this can be assumed without loss of generality because there always is a unique way of extending a partial automorphism not satisfying this condition to one which does.  In this language, we can then state our main theorem as:
\begin{theorem}\label{thm:antipeppa}
The class of all finite integer-valued antipodal metric spaces of diameter 3 has coherent EPPA.
\end{theorem}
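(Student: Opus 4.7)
The plan is to give a direct Hrushovski-style construction of the EPPA-witness $\str N$, exploiting the fact that antipodal metric spaces of diameter $3$ are essentially double covers of complete graphs. Given $\str M$ with vertex set $V$ and set $P$ of antipodal pairs, one observes that $\str M$ is determined by the antipodal matching together with, for each pair of pairs $\{p,q\}\subseteq P$, one bit specifying whether the distance-$1$ edges between $p$ and $q$ form a ``parallel'' or a ``crossed'' matching (the $(2,2,3)$-free condition forces the four inter-pair distances to satisfy $d(a_i,a_j)=d(b_i,b_j)$ and $d(a_i,b_j)=d(b_i,a_j)$). Flipping the representative chosen at a pair $p$ simultaneously flips all bits at pairs involving $p$, so $\str M$ is a $(\mathbb{Z}/2)^P$-torsor over its underlying switching class of graphs.

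I would build $\str N$ on a quotient of $V\times\Gamma$, where $\Gamma$ is a sufficiently large finite abelian group containing $(\mathbb{Z}/2)^P$ as a direct summand. The $(\mathbb{Z}/2)^P$ factor absorbs the section ambiguity by identifying $(v,\gamma)$ with $(v',\gamma+e_{p(v)})$; the complementary factor supplies the ``room'' needed to lift partial automorphisms, in the style of Hrushovski's permutation-module construction for graphs. Distances in $\str N$ between vertices in distinct antipodal pairs are defined by a $\Gamma$-equivariant rule computing $d_{\str N}((v,\gamma),(w,\delta))$ as a function of $\gamma-\delta$, $p(v)$, $p(w)$ and $d_{\str M}$, so that the slice $\gamma=0$ recovers $\str M$. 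Verifying that $\str N$ is itself an antipodal metric space of diameter $3$ should reduce to a local check: by $\Gamma$-equivariance every triple of vertices of $\str N$ is a $\Gamma$-translate of a triple whose distances are read off $\str M$, whence the triangle inequality and the absence of $(2,2,3)$-triangles transfer.

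Partial automorphisms $\pi$ of $\str M$ would be extended by a formula $(v,\gamma)\mapsto(\pi(v),A_\pi\gamma+b_\pi)$, with $A_\pi\in\Aut(\Gamma)$ and $b_\pi\in\Gamma$ encoding the section changes induced by $\pi$. The main obstacle is \emph{coherence}: the assignment $\pi\mapsto\tilde\pi$ must be a monoid homomorphism into $\Aut(\str N)$, which forces $(A_{\pi_2\pi_1},b_{\pi_2\pi_1})=(A_{\pi_2}A_{\pi_1},A_{\pi_2}b_{\pi_1}+b_{\pi_2})$ whenever the composition is defined. I expect this to dictate that $\Gamma$ be taken as a canonical permutation module attached to $V$ (analogous to the $\{0,1\}^V$ factor in Hrushovski's graph construction), with $b_\pi$ given by an explicit cocycle formula on the section-flip data; the technical heart of the proof is then to exhibit this $\Gamma$ concretely and verify simultaneously that the resulting $\str N$ is antipodal of diameter $3$ and that every partial automorphism of $\str M$ lifts coherently.
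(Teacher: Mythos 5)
Your overall frame (antipodal spaces as double covers, a witness built on antipodal-pairs times valuations, a $(\mathbb{Z}/2)^P$ factor absorbing the choice of representative in each pair) is close in spirit to the paper's construction, which puts the witness on $M\times\{0,1\}^M$ for $M$ the set of distance-$3$ edges. But the two design choices you actually commit to are incompatible with each other, and this is a genuine obstruction rather than a missing verification. If $d_{\str N}((v,\gamma),(w,\delta))$ depends only on $\gamma-\delta$ (together with $p(v)$, $p(w)$ and $d_{\str M}$), and the lift of a partial automorphism has the affine form $(v,\gamma)\mapsto(\pi(v),A_\pi\gamma+b_\pi)$ with a single $b_\pi\in\Gamma$, then $b_\pi$ cancels in every difference: $(A_\pi\gamma+b_\pi)-(A_\pi\delta+b_\pi)=A_\pi(\gamma-\delta)$. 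Evaluating at $\gamma=\delta$, the requirement that the lift preserve distances forces the underlying vertex map to preserve your parallel/crossed bit $\epsilon(v,w)$ for \emph{all} $v,w\in V$ (up to the one binary representative choice per antipodal pair); that is, the induced partial automorphism of the associated two-graph $T(\str M)$ must already extend to a full automorphism of $T(\str M)$ --- which is exactly what cannot be assumed and is essentially the statement being proved. No choice of $\Gamma$, $A_\pi$ or $b_\pi$ repairs this, because a global translation is invisible to a translation-invariant distance. The paper evades precisely this trap in two ways: its adjacency rule is the non-equivariant ``mutual evaluation'' condition $d_{\str B}((e,\chi),(f,\chi'))=1$ iff $\chi(f)=\chi'(e)$, and the correction applied to the valuation is not a global translation but depends on the base point, $\xi(\hat\varphi(f))=\chi(f)+[\{e,f\}\in F]$, where $F$ is the set of pairs ``flipped'' by the partial automorphism. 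That base-point dependence is the entire content of the extension step, and your ansatz rules it out.

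Separately, even granting the ansatz, the proposal defers every concrete object ($\Gamma$, the distance rule, $A_\pi$, $b_\pi$) and every verification (that $\str N$ is an antipodal metric space, that the lift extends $\pi$, coherence) to ``the technical heart of the proof'', so there is no argument to check. Two further points you would need to address in any repair: $\pi(v)$ is undefined off $\dom(\pi)$, so one must first extend the induced partial permutation of the antipodal pairs to a full permutation of $M$ --- the paper does this \emph{order-preservingly}, and that choice is exactly what yields coherence; and coherence in the Solecki--Siniora sense is a condition on coherent triples of partial maps rather than literally a monoid homomorphism, although the cocycle identity you write down is the right shape for it.
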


This theorem also holds for all other antipodal metric spaces from Cherlin's
catalogue of metrically homogeneous graphs~\cite{Cherlin2013}, see~\cite{Konecny2019a}.
It answers affirmatively a question of Aranda, Bradley-Williams, Hubi\v cka,
Karamanlis, Kompatscher, Kone\v cn\'y and Pawliuk~\cite{Aranda2017} and completes
the analysis of EPPA for all classes from Cherlin's catalogue. However, in this note, for brevity, we often refer to an antipodal, integer-valued metric space of diameter 3 as an \emph{antipodal metric space}. Other antipodal metric spaces are not considered.

\section{Notation and preliminaries}
It is in the nature of this paper to consider multiple types of structures.
We will use bold letter such as $\str{A},\str{B},\str{C},\ldots$ to denote structures ((hyper)graphs or metric spaces defined below)
and corresponding normal letters, such as $A,B,C,\ldots$, to denote corresponding vertex sets. Our
substructures (sub-(hyper)graphs or subspaces) will always be induced.

Formally, we will consider a metric space to be a complete edge-labelled graph (that is, a complete graph where edges are labelled by the respective distances), or, equivalently, a relational structure with multiple binary relations representing the distances. This justifies that we will speak of pairs of vertices at distance $d$ as of \emph{edges of length $d$}. We will, however, use both notions (a vertex set with a distance function or a complete edge-labelled graph) interchangeably.
We adopt the standard notion of isomorphism, embedding and substructure.

\subsection{Coherent EPPA} Suppose $\mathcal C$ is a class of finite structures which has the hereditary and joint embedding properties. If $\mathcal C$ has EPPA, then it has the amalgamation property, so, assuming that there are only countably many isomorphism types of structures in $\mathcal C$, then we can consider the \Fraisse{} limit $\str M$ of $\mathcal C$. 
Coherence is a natural strengthening of EPPA which guarantees that $\Aut(\str{M})$ has a  dense locally finite subgroup~\cite{solecki2009,Siniora}. Note that the existence of a dense locally finite subgroup of the automorphism group of a homogeneous structure implies that its age has EPPA, but it is not known whether coherent EPPA also follows from this: see Section 5.1 of~\cite{Siniora}.
  At the moment all previously known EPPA classes are also
coherent EPPA classes. Interestingly, we can prove coherent EPPA for the antipodal metric spaces of diameter 3, but not for two-graphs (this is discussed in~Section~\ref{sec:groupmagic}). We need to introduce two additional
definitions.
\label{sec:coherence}
\begin{definition}[Coherent maps~\cite{solecki2009,Siniora}]
Let $X$ be a set and $\mathcal P$ be a family of partial bijections 
 between subsets
of $X$. A triple $(f, g, h)$ from $\mathcal P$ is called a {\em coherent triple} if $$\dom(f) = \dom(h), \range(f ) = \dom(g), \range(g) = \range(h)$$ and $$h = g \circ f.$$

Let $X$ and $Y$ be sets, and $\mathcal P$ and $\mathcal Q$ be families of partial bijections between subsets
of $X$ and between subsets of $Y$, respectively. A function $\varphi\colon \mathcal P \to \mathcal Q$ is said to be a
{\em coherent map} if for each coherent triple $(f, g, h)$ from $\mathcal P$, its image $(\varphi(f), \varphi(g), \varphi(h))$ in $\mathcal Q$ is coherent. In the case of EPPA, where the elements of $\mathcal Q$ are automorphisms of $Y$, we sometimes refer to the image of $\varphi$ as a coherent family of automorphisms extending $\mathcal P$.
\end{definition}
\begin{definition}[Coherent EPPA~\cite{solecki2009,Siniora}]
\label{defn:coherent}
A class $\mathcal C$ of finite structures is said to have {\em coherent EPPA} if $\mathcal C$ has EPPA and moreover the extension of partial automorphisms
is coherent. More precisely, for every $\str{A} \in \mathcal C$, there exists an EPPA-witness $\str{B} \in \mathcal C$ for $\str A$ and a coherent map $f \mapsto \hat{f}$ from the family of partial automorphisms of $\str A$ to the group of automorphisms of $\str B$. In this case we also call $\str{B}$ a \emph{coherent EPPA-witness} of $\str{A}$.
\end{definition}

\section{EPPA for antipodal metric spaces}
Given an antipodal metric space $\str{A}$ we give a direct construction of a
coherent EPPA-witness $\str{B}$. Some ideas are based on a construction
of Hodkinson and Otto~\cite{hodkinson2003} and some of the terminology is
loosely based on Hodkinson's exposition of this construction~\cite{hodkinson}.
Note that our techniques also give a very simple and short proof of EPPA for graphs.

Fix a (finite) antipodal metric space $\str{A}$.  
Denote by $M=\{e_1,e_2,\ldots, e_n\}$ the set of all edges of $\str{A}$ of length 3.
For a function $\chi\colon M\to\{0,1\}$ we denote by $1-\chi$ the function satisfying $(1-\chi)(e)=1-\chi(e)$ for every $e\in M$.
We construct $\str{B}$ as follows:
\begin{enumerate}
  \item The vertices of $\str{B}$ are all pairs $(e,\chi)$ where $e\in M$ and $\chi$ is a function
from $M$ to $\{0,1\}$ (called a \emph{valuation function}).
  \item Distances for $(e,\chi)\neq (f,\chi')\in B$ are given by the following rules:
    \begin{enumerate}[label=(\roman*)]
       \item\label{C:a} $d_\str{B}((e,\chi),(e,1-\chi))=3$,
       \item\label{C:b} $d_\str{B}((e,\chi),(f,\chi'))=1$ if and only if $\chi(f)=\chi'(e)$,
       \item\label{C:c} $d_\str{B}((e,\chi),(f,\chi'))=2$ otherwise.
    \end{enumerate}
\end{enumerate}

\begin{lemma}
The structure $\str{B}$ is an antipodal metric space.
\end{lemma}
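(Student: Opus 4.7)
The plan is to verify the three defining conditions for $\str{B}$ to be an antipodal metric space of diameter $3$: (a) the prescribed distances form a metric, (b) the edges of length $3$ form a perfect matching, and (c) no triangle has distances $2,2,3$. Well-definedness and symmetry of $d_\str{B}$ are immediate from inspection of rules \ref{C:a}--\ref{C:c}: \ref{C:a} is symmetric because $1-(1-\chi)=\chi$, \ref{C:b} is symmetric in its two arguments, and \ref{C:c} is the default. Every distinct pair is assigned a value in $\{1,2,3\}$, so distances are positive on distinct vertices.

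For (b), observe that rule \ref{C:a} is the only rule producing distance $3$, and it uniquely pairs $(e,\chi)$ with $(e,1-\chi)$. Hence every vertex has a unique antipode, so the edges of length $3$ form a perfect matching. In particular, a triangle in $\str{B}$ can contain at most one edge of length $3$.

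The heart of the argument is the following claim, which handles both the triangle inequality and condition (c) in one stroke: for every vertex $u=(e,\chi)$, its antipode $u'=(e,1-\chi)$, and every third vertex $w=(f,\chi')$, exactly one of $d_\str{B}(u,w)$ and $d_\str{B}(u',w)$ equals $1$ and the other equals $2$. I would prove this by a direct case split on whether $f=e$. If $f\neq e$, rule \ref{C:a} fails for both pairs, so each distance is $1$ or $2$; moreover $d_\str{B}(u,w)=1$ iff $\chi(f)=\chi'(e)$, while $d_\str{B}(u',w)=1$ iff $(1-\chi)(f)=\chi'(e)$, i.e.\ iff $\chi(f)\neq\chi'(e)$, and these conditions are complementary. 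If $f=e$, then $w\neq u,u'$ forces $\chi'\notin\{\chi,1-\chi\}$, so rule \ref{C:a} still does not apply, and the same dichotomy $\chi(e)=\chi'(e)$ versus $(1-\chi)(e)=\chi'(e)$ yields the conclusion.

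Given the claim, no triangle has distance multiset $\{1,1,3\}$ or $\{2,2,3\}$, and triangles entirely within $\{1,2\}$ trivially satisfy the triangle inequality; combined with (b), this leaves exactly the triples $(1,1,1),(1,1,2),(1,2,2),(2,2,2),(1,2,3)$ as admissible, all of which are metric and consistent with the antipodal axioms. This establishes that $\str{B}$ is an antipodal metric space of diameter $3$. The proof is essentially a mechanical case check; the only thing to get right is the formulation via the above dichotomy, so there is no substantial obstacle.
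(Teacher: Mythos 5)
Your proof is correct and follows essentially the same route as the paper: the perfect-matching condition comes from rule \ref{C:a}, and the key dichotomy (for an antipodal pair $u,u'$ and any third vertex $w$, exactly one of $d_\str{B}(u,w)$, $d_\str{B}(u',w)$ equals $1$ and the other equals $2$) is exactly the observation the paper makes when checking quadruples $(e,\chi),(e,1-\chi),(f,\chi'),(f,1-\chi')$. Your explicit treatment of the case $f=e$ and the enumeration of admissible distance triples are slightly more detailed than the paper's, but the substance is identical.
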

\begin{proof}
Given $(e,\chi)\in B$, by \ref{C:a} we know that there is precisely one vertex at distance 3 (namely $(e,1-\chi)$)
 and thus the edges of length 3 form a perfect matching.

It remains to check that  every quadruple
$(e,\chi),(e,1-\chi),(f,\chi'),(f,1-\chi')$ of distinct vertices of $\str{B}$
is an antipodal metric space.
By \ref{C:b} we know that precisely one of $(f,\chi'),(f,1-\chi')$ is at distance 1 from $(e,\chi)$
and by \ref{C:c} that the other is at distance 2, similarly for $(e, 1-\chi)$. It also follows that $d_{\str B}((e,\chi),(f,\chi'))=d_{\str B}((e,1-\chi),(f,1-\chi'))$ and $d_{\str B}((e,\chi),(f,1-\chi'))=d_{\str B}((e,1-\chi),(f,\chi'))$.
\end{proof}
We now define an embedding $\psi\colon \str{A}\to \str{B}$ and refer to its image
 $\str{A}'$  in $\str{B}$ as a \textit{generic copy} of $\str{A}$ in $\str{B}$.

Fix an arbitrary function $p\colon A\to \{0,1\}$ such that whenever $d_\str{A}(x,x')=3$, then $p(x)=1-p(x')$. This function partitions the vertices of $\str{A}$ into two \emph{podes} such that
pairs of vertices at distance 3 are in different podes. For every $1\leq i\leq n$ we denote by $x_i$ and $y_i$ the endpoints of $e_i$ such that $p(x_i)=0$ and $p(y_i)=1$.
We construct $\psi$ by putting $\psi(x_i) = (e_i, \chi_i)$ and $\psi(y_i) = (e_i, 1-\chi_i)$, where $\chi_i$ is defined as
$$
\chi_i(e_j) = 
\begin{cases}
  0 & \text{if }j\geq i \\
  0 & \text{if }j < i\text{ and } d_{\str A}(x_i, x_j) = 1 \\
  1 & \text{otherwise.}
\end{cases}
$$
It follows from the construction that $\psi$ is indeed an embedding $\str A\to \str B$. We put $\str A'=\psi(\str A)$. Now we are ready to show the main result of this section:

\begin{prop}
\label{prop:tmain}
With the above notation, the antipodal metric space $\str{B}$ is a coherent EPPA-witness for $\str{A}'$.
Moreover, $p\circ \psi^{-1}$ extends to a function $\hat p\colon B\to \{0,1\}$ such that whenever a partial automorphism $\varphi$ of $\str{A}'$ preserves values of $p\circ\psi^{-1}$, then its coherent extension $\theta$ preserves values of $\hat p$.
\end{prop}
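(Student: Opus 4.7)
The plan is to parametrize a natural family of automorphisms of $\str B$ and lift each partial automorphism of $\str A'$ to one of them canonically.

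For a permutation $\sigma$ of $M$ and a symmetric function $\alpha\colon M\times M\to\{0,1\}$, define $\theta_{\sigma,\alpha}\colon B\to B$ by
\[
\theta_{\sigma,\alpha}(e,\chi)=\bigl(\sigma(e),\,F_e(\chi)\bigr),\qquad F_e(\chi)(g):=\chi(\sigma^{-1}(g))+\alpha(e,\sigma^{-1}(g))
\]
(with addition taken modulo $2$ throughout). A short check against \ref{C:a}--\ref{C:c} shows that $\theta_{\sigma,\alpha}\in\Aut(\str B)$: rule \ref{C:a} holds because $F_e(1-\chi)=1-F_e(\chi)$, and the equivalence in \ref{C:b} for $e\neq f$ reduces to $F_e(\chi)(\sigma(f))+F_f(\xi)(\sigma(e))=\chi(f)+\xi(e)+\alpha(e,f)+\alpha(f,e)=\chi(f)+\xi(e)$, using symmetry of $\alpha$. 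A direct computation yields the composition law $\theta_{\sigma_2,\alpha_2}\circ\theta_{\sigma_1,\alpha_1}=\theta_{\sigma_2\sigma_1,\,\alpha_1+\alpha_2\circ(\sigma_1\times\sigma_1)}$.

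Fix a linear order on $M$. Given a partial automorphism $\varphi$ of $\str A'$ (with $\dom(\varphi)$ closed under antipodes), put $D_\varphi:=\{e\in M:(e,\ast)\in\dom(\varphi)\}$, let $\pi_\varphi$ be the partial permutation of $M$ induced on $D_\varphi$, and extend $\pi_\varphi$ to $\sigma_\varphi\in\mathrm{Sym}(M)$ by mapping $M\setminus D_\varphi$ onto its complementary image in an order-preserving way. Writing $\varphi(e,\chi_e)=(\sigma_\varphi(e),\tau_\varphi(e))$ for $e\in D_\varphi$, set $\alpha_\varphi(e,f):=\chi_e(f)+\tau_\varphi(e)(\sigma_\varphi(f))$ for $e\in D_\varphi$ and any $f\in M$, extend by symmetry when only $f\in D_\varphi$, and put $\alpha_\varphi(e,f):=0$ otherwise. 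Symmetry on $D_\varphi\times D_\varphi$ is exactly the condition that $\varphi$ preserves distance $1$; it yields $\chi_e(f)+\chi_f(e)=\tau_\varphi(e)(\sigma_\varphi(f))+\tau_\varphi(f)(\sigma_\varphi(e))$ modulo $2$. Put $\theta_\varphi:=\theta_{\sigma_\varphi,\alpha_\varphi}$; by construction it extends $\varphi$.

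The heart of the argument is coherence: for $\varphi_3=\varphi_2\circ\varphi_1$ with $\dom(\varphi_2)=\range(\varphi_1)$ one needs $\theta_{\varphi_3}=\theta_{\varphi_2}\circ\theta_{\varphi_1}$. The identity $\sigma_{\varphi_3}=\sigma_{\varphi_2}\sigma_{\varphi_1}$ follows because order-preserving extensions compose correctly; crucially $\sigma_{\varphi_1}$ sends $M\setminus D_{\varphi_1}$ order-preservingly onto $M\setminus D_{\varphi_2}$. The $\alpha$-identity $\alpha_{\varphi_3}(e,f)=\alpha_{\varphi_1}(e,f)+\alpha_{\varphi_2}(\sigma_{\varphi_1}(e),\sigma_{\varphi_1}(f))$ is verified by cases on membership of $e,f$ in $D_{\varphi_1}$: when $e\in D_{\varphi_1}$ the middle term $\tau_{\varphi_1}(e)(\sigma_{\varphi_1}(f))$ appears twice and cancels, leaving $\chi_e(f)+\tau_{\varphi_3}(e)(\sigma_{\varphi_3}(f))$ via $\tau_{\varphi_3}(e)=\tau_{\varphi_2}(\sigma_{\varphi_1}(e))$; when $e,f\notin D_{\varphi_1}$ all three values vanish because $\sigma_{\varphi_1}(e),\sigma_{\varphi_1}(f)\notin D_{\varphi_2}$.

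For the \emph{moreover} clause, define $\hat p(e,\chi):=\chi(e)$. The formula for $\psi$ gives $\chi_i(e_i)=0=p(x_i)$ and $(1-\chi_i)(e_i)=1=p(y_i)$, so $\hat p$ extends $p\circ\psi^{-1}$. Since $F_e(\chi)(\sigma_\varphi(e))=\chi(e)+\alpha_\varphi(e,e)$, preservation of $\hat p$ by $\theta_\varphi$ amounts to $\alpha_\varphi(e,e)=0$; this holds by definition for $e\notin D_\varphi$, and for $e\in D_\varphi$ reduces to $\hat p(e,\chi_e)+\hat p(\varphi(e,\chi_e))=0$, which is the hypothesis. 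The main obstacle is the bookkeeping in the coherence check: the order-preserving extension of $\sigma$ is what aligns the ``free'' regions $M\setminus D_{\varphi_i}$ across compositions, making the extend-by-$0$ recipe for $\alpha$ compatible with the composition law.
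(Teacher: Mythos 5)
Your proof is correct and takes essentially the same route as the paper: your symmetric function $\alpha_\varphi$ is exactly the indicator of the paper's set $F$ of flipped pairs, your order-preserving extension of the projected permutation and the definition $\hat p(e,\chi)=\chi(e)$ coincide with the paper's, and your coherence check is the paper's computation repackaged via the explicit composition law $\theta_{\sigma_2,\alpha_2}\circ\theta_{\sigma_1,\alpha_1}=\theta_{\sigma_2\sigma_1,\,\alpha_1+\alpha_2\circ(\sigma_1\times\sigma_1)}$. The only (welcome) difference is organizational: you isolate the ambient group of automorphisms $\theta_{\sigma,\alpha}$ up front, which makes the coherence verification slightly more systematic than the paper's direct case analysis.
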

\begin{proof}
Let $\varphi$ be a partial automorphism of $\str{A}'$. Let $\pi\colon B \to M$ be
the {projection} mapping $(e,\chi) \mapsto e$.  By this projection, $\varphi$
induces a partial permutation of $M$ and we denote by $\hat\varphi$ an extension of it to a
permutation of $M$. To obtain coherence we always extend the permutation in an
\emph{order-preserving way}, that is, we enumerate $M\setminus\dom(\varphi) = \{e_{i_1},\ldots,e_{i_k}\}$ and $M\setminus\range(\varphi) = \{e_{j_1},\ldots,e_{j_k}\}$, where $i_1 < \cdots < i_k$ and $j_1 < \cdots < j_k$, and put $\hat\varphi(e_{i_\ell}) = e_{j_\ell}$ for every $1\leq \ell\leq k$ (cf. ~\cite{Siniora,solecki2009}).

Let $F$ be the set consisting of unordered pairs $\{e,f\}$, $e,f\in M$ (possibly $e=f$), such that there exists $\chi$
with the property that $(e,\chi)\in \dom(\varphi)$ and $\chi (f)\neq \chi' (\hat\varphi(f))$
for $(\hat\varphi(e),\chi')=\varphi ((e,\chi))$. We say that these pairs are \emph{flipped} by $\varphi$. Because of the choice of $\str A'$, there are zero or  two choices for $\chi$ for every $e$, and if there are two, then they are $\chi$ and $1-\chi$ for some $\chi$ and both of them give the same result. Note that there may be no $\eta$ such that $(f,\eta)\in \dom(\varphi)$.


Define a function $\theta\colon B\to B$ by putting
$$\theta((e,\chi))=(\hat\varphi(e),\xi)$$ where 
$$
\xi(\hat\varphi(f))=
\begin{cases}
  \chi(f) &  \text{if }\{e,f\}\notin F \\
  1-\chi(f) & \text{if }\{e,f\}\in F.
\end{cases}
$$

\medskip

First we verify that $\theta$ extends $\varphi$.  Suppose $(e,\chi) \in \dom(\varphi)$. Write  $\theta(e,\chi) = (\hat\varphi(e),\xi)$ and $\varphi(e,\chi) = (\hat\varphi(e),\chi')$. We must check that $\xi = \chi'$, so we let $f \in M$ and show that $\chi'(\hat\varphi(f)) = \xi(\hat\varphi(f))$. But this follows easily from the definitions of $\xi$ and $F$, by considering the cases $\{e,f\} \in F$ and $\{e,f\} \not\in F$ separately.

\medskip

Now we check that $\theta$ is an automorphism of $\str{B}$. It is
easy to see that $\theta$ is one-to-one (one can construct its inverse) and that it preserves antipodal pairs. To check that $d_\str B((e,\chi),(f,\eta)) = d_\str B(\theta((e,\chi)),\theta((f,\eta)))$ for non-antipodal pairs, denote $\theta((e,\chi)) = (\hat\varphi(e),\chi')$ and $\theta((f,\eta)) = (\hat\varphi(f),\eta')$.

By definition of $\theta$, we have $\chi(f)\neq \chi'(\hat\varphi(f))$ if and only if $\{e,f\}\in F$, and analogously, $\eta(e)\neq \eta'(\hat\varphi(e))$ if and only if $\{e,f\}\in F$. Putting this together (and again considering the cases $\{e,f\} \in F$ and $\{e,f\} \not\in F$ separately) we get $\chi(f) =  \eta(e)$ if and only if $\chi'(\hat\varphi(f))= \eta'(\hat\varphi(e))$. Together with the definition of $d_{\str B}$ this implies that indeed $d_\str B((e,\chi),(f,\eta)) = d_\str B(\theta((e,\chi)),\theta((f,\eta)))$.

\medskip

Thus far, we have shown that $\str{B}$ is an EPPA-witness for $\str{A}'$.

\medskip

Now we put $\hat p((e,\chi))=\chi(e)$. Note that $\hat p(\psi(x_i)) = \chi_i(e_i) = 0=p(x_i)$ and similarly $\hat p(\psi(y_i)) = p(y_i)$. So $\hat p$ extends $p \circ\psi^{-1}$.

Suppose $\varphi$ preserves values of $p$. Then for $(e,\chi)\in \dom(\varphi)$ it holds that $\chi(e) = \chi'(\hat\varphi(e))$, where $(\hat\varphi(e),\chi')=\varphi((e,\chi))$. Thus there is no $e\in M$ such that $\{e\}\in F$ (for such an $e$, there would have to be some $(e,\chi) \in \dom\varphi$, of course). By definition of $\theta$ we immediately get $\hat p((e,\chi))=\hat p(\theta ((e,\chi)))$ for every $(e,\chi)\in B$.

\medskip

Finally we verify coherence for the above construction.
Consider partial automorphisms $\varphi_1$, $\varphi_2$ and $\varphi$ of 
$\str{A}'$ such that $\varphi$ is the composition of $\varphi_1$ and $\varphi_2$.
Denote by $\hat\varphi_1$, $\hat\varphi_2$ and $\hat\varphi$ their
corresponding permutations of $M$ constructed above. Let   $F_1$, $F_2$ and $F$ be the corresponding sets of flipped pairs
and  $\theta_1$, $\theta_2$ and $\theta$ the corresponding extensions. Because the 
permutations $\hat\varphi_1$, $\hat\varphi_2$ and $\hat\varphi$ were constructed by extending projections of $\varphi_1$, $\varphi_2$ and $\varphi$  (which also compose coherently) in an order-preserving way, we know that
$\hat\varphi$ is the composition of $\hat\varphi_1$ and $\hat\varphi_2$.  

To see
that $\theta$ is the composition of $\theta_1$ and $\theta_2$ one first checks that for $e \in \pi(\dom(\varphi)) = \pi(\dom(\varphi_1))$ and $f \in M$ one has that $\{e,f\} \in F$ if and only if
\[(\{e,f\} \not\in F_1 \mbox{ and } \{\hat\varphi_1(e),\hat\varphi_1(f)\} \not\in F_2) \mbox{ or } (\{e,f\} \in F_1 \mbox{ and } \{\hat\varphi_1(e),\hat\varphi_1(f)\} \in F_2).\]

The definition of $\theta_1, \theta_2$ and $\theta$ then gives the required result. In more detail, write $\theta_1(e,\chi) = (\hat\varphi_1(e), \xi_1)$ and $\theta_2\theta_1((e,\chi)) = \theta_2(\hat\varphi_1(e), \xi_1) = (\hat\varphi(e),\xi_2)$, where, for $f \in M$,
$$
\xi_2(\hat\varphi_2(\hat\varphi_1(f)))=
\begin{cases}
  \xi_1(\hat\varphi_1(f)) &  \text{if }\{\hat\varphi_1(e),\hat\varphi_1(f)\}\not\in F_2 \\
  1-\xi_1(\hat\varphi_1(f)) &  \text{if }\{\hat\varphi_1(e),\hat\varphi_1(f)\}\in F_2.
\end{cases}
$$
Applying the definition of $\xi_1$ and using the above observation finishes the calculation.

%
%
%
%

\medskip

\end{proof}

\section{Proofs of the main results}
\label{sec:equivalence}
Theorem~\ref{thm:antipeppa} is a direct consequence of Proposition~\ref{prop:tmain}. Next we use the correspondence between graphs with switching automorphisms and antipodal metric spaces (or, in the language of Seidel, double-covers of complete graphs~\cite{Cameron1999,Seidel1973}) to prove Theorem~\ref{thm:switchings}:

\begin{proof}[Proof of Theorem~\ref{thm:switchings}]
Given a graph $\str{G}$, we construct an antipodal metric space $\str{A}$ 
on vertex set $G\times\{0,1\}$ with distances defined as follows:
\begin{enumerate}
\item $d_\str{A}((x,0),(x,1))=3$ for every $x\in G$,
\item $d_\str{A}((x,i),(y,i))=1$ for every $x\neq y$ forming
an edge of $\str{G}$ and $i\in \{0,1\}$,
\item $d_\str{A}((x,i),(y,1-i))=1$ for every $x\neq y$ forming a non-edge of $\str{G}$ and $i\in \{0,1\}$, and
\item $d_\str{A}((x,i),(y,j))=2$ otherwise.
\end{enumerate}

Let $p\colon A\to \{0,1\}$ be a function defined by $p((x,i))=i$. We apply
Proposition~\ref{prop:tmain} to construct an antipodal metric space $\str{C}$ 
and a function $\hat p\colon C\to \{0,1\}$. Construct a graph $\str{H}$ with vertex set $\{x\in C:\hat{p}(x)=0\}$ with
$x,y$ forming an edge if and only if $d_\str{C}(x,y)=1$. 

Now consider a partial automorphism $\varphi$ of $\str{G}$. This automorphism
corresponds to a partial automorphism $\varphi'$ of $\str{A}$ by putting
$\varphi'((x,i))=(\varphi(x),i)$ for every $x\in \dom(\varphi)$ and $i\in
\{0,1\}$. $\varphi'$ then extends to $\theta$ which preserves values of
$\hat p$. Consequently, $\theta$ restricted to $H$ is an automorphism of
$\str{H}$.

Finally consider a partial switching automorphism $\varphi$ (i.e. a switching isomorphism of induced subgraphs). Let $S$ be the set
of vertices switched by $\varphi$.  Now the partial automorphism of $\str{A}$
is defined by putting $\varphi'((x,i))=(\varphi(x),i)$ if $x\notin S$ and
$\varphi'((x,i))=(\varphi(x),1-i)$ otherwise. Again extend $\varphi'$ to $\theta$
and observe that $\theta$ gives a switching automorphism of $\str{H}$.
\end{proof}

EPPA for two-graphs follows easily too:
\begin{proof}[Proof of Theorem~\ref{thm:twographeppa}]
Let $\str{T}$ be a finite two-graph, pick an arbitrary vertex $x\in T$ and define a graph $\str G$ on the vertex set $T$ such that $\{y,z\}\in E_\str G$ if and only if $x\notin \{y,z\}$ and $\{x,y,z\}$ is  a triple of $\str T$. Observe that $\str T = T(\str G)$. By Theorem~\ref{thm:switchings}, there is a graph $\str H$ containing $\str G$ such that every switching isomorphism of induced subgraphs of $\str G$ extends to a switching automorphism of $\str H$. We claim that $T(\str H)$ is an EPPA-witness for $\str T$.

To prove that, let $\varphi\colon \str T\to \str T$ be a partial automorphism of $\str T$. By the construction of $\str G$ and the correspondence between graphs and two-graphs, $\varphi$ is a switching isomorphism of subgraphs of $\str G$ induced on the domain and range of $\varphi$ respectively. Hence, it extends to a switching automorphism of $\str H$ and thus an automorphism of $T(\str H)$, which is what we wanted.
\end{proof}

\begin{remark}
Observe that the EPPA-witness given in this proof of Theorem~\ref{thm:twographeppa}
is not necessarily a coherent EPPA-witness. The problem lies in the proof of Theorem~\ref{thm:switchings}, because for every switching isomorphism of subgraphs of $\str G$ there are two corresponding partial automorphisms of $\str A$. For example, if the partial automorphism of $\str G$ is a partial identity, one partial automorphism of $\str A$ is also a partial identity, while the other flips all the involved edges of length 3.
While the first is extended to the identity by the construction in Proposition~\ref{prop:tmain}, the
other is extended to a non-trivial permutation of the edges of length 3. This issue carries over to Theorem~\ref{thm:twographeppa}, and in fact, it seems
to be a fundamental obstacle for using antipodal metric spaces to prove coherent EPPA for two-graphs.
\end{remark}

\section{Existence of a dense locally finite subgroup}
\label{sec:groupmagic} 
As discussed in Section~\ref{sec:coherence} here, Solecki and Siniora~\cite{Siniora,solecki2009} introduced the notion of \emph{coherent EPPA} for a \Fraisse{} class as a way to prove that the automorphism group of the respective \Fraisse{} limit contains a dense locally finite subgroup. While we cannot prove coherent EPPA for $\mathcal T$, Theorem~\ref{thm:antipeppa} gives coherent EPPA for the class of all antipodal metric spaces of diameter 3 and thus there is a dense locally finite subgroup of the automorphism group of its \Fraisse{} limit. We now show how this gives a dense, locally finite subgroup of the automorphism group of the \Fraisse{} limit of $\mathcal T$.

Let $\mathbb T$ be the \Fraisse{} limit of $\mathcal T$ and $\mathbb M$ be the \Fraisse{} limit of the class of all finite antipodal metric spaces of diameter $3$. The main result of this section is the following theorem.
\begin{theorem}\label{thm:groups}
There is a dense, locally finite subgroup of $\Aut(\mathbb T)$.
\end{theorem}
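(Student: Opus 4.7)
The plan is to realize $\Aut(\mathbb T)$ as a continuous homomorphic image of $\Aut(\mathbb M)$ and then push forward the dense locally finite subgroup of $\Aut(\mathbb M)$ supplied by Theorem~\ref{thm:antipeppa} together with the Solecki--Siniora theorem.

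First I would identify $\mathbb T$ with the quotient of $\mathbb M$ by the antipodal relation $\sim$, equipped with the two-graph structure declaring $\{[x],[y],[z]\}$ a hyperedge iff the number of distance-$1$ pairs among $\{x,y\},\{x,z\},\{y,z\}$ is odd. The key observation is the antipodal identity $d(x,u)+d(x',u)=3$ for every $u\notin\{x,x'\}$, which follows immediately from the triangle inequality and the forbidden $(2,2,3)$-triangle. This implies that swapping a representative for its antipode flips exactly two of the three relevant edge-length parities, so the hyperedge definition is independent of representatives. The resulting structure is a countable two-graph which (i) contains every finite two-graph as a substructure, via the antipodal metric double-cover used in the proof of Theorem~\ref{thm:switchings}, and (ii) is ultrahomogeneous, because an isomorphism between finite sub-two-graphs corresponds, after choosing podes, to a switching isomorphism of graphs, which lifts to an isomorphism of the corresponding antipodal metric spaces and then extends by the ultrahomogeneity of $\mathbb M$. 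Hence this quotient is isomorphic to $\mathbb T$, and the quotient map induces a group homomorphism $\pi\colon\Aut(\mathbb M)\to\Aut(\mathbb T)$.

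Next I would verify that $\pi$ is continuous and surjective. Continuity is routine: the pointwise stabilizer in $\Aut(\mathbb T)$ of a finite $X\subseteq\mathbb T$ pulls back to a subgroup of $\Aut(\mathbb M)$ containing the pointwise stabilizer of the (finite) antipodal closure of any preimage of $X$. Surjectivity is the main step and is proved by a back-and-forth construction along an exhausting chain of antipodally closed finite substructures $\str A_1\subseteq\str A_2\subseteq\cdots$ of $\mathbb M$: given $\sigma\in\Aut(\mathbb T)$, one repeatedly lifts the restricted isomorphism of two-graphs to an isomorphism of antipodal metric spaces (using that a switching isomorphism of graphs admits a lift to the double-cover) and extends it using the ultrahomogeneity of $\mathbb M$, assembling an element of $\Aut(\mathbb M)$ whose $\pi$-image is $\sigma$.

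Finally, by Theorem~\ref{thm:antipeppa} and the Solecki--Siniora theorem, $\Aut(\mathbb M)$ contains a dense locally finite subgroup $G$. Then $\pi(G)\leq\Aut(\mathbb T)$ is dense, since continuous surjections map dense subsets to dense subsets, and locally finite, since any finite $F\subseteq\pi(G)$ lifts to a finite $\tilde F\subseteq G$, which sits inside a finite subgroup $H\leq G$, and $\pi(H)$ is then a finite subgroup of $\pi(G)$ containing $F$. The main obstacle is the surjectivity of $\pi$, which requires the careful back-and-forth lift; the identification of $\mathbb T$ with the quotient and the transfer of density and local finiteness to $\Aut(\mathbb T)$ are essentially bookkeeping.
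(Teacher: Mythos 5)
Your proposal is correct and follows essentially the same route as the paper: identify $T(\mathbb M)$ (the antipodal quotient of $\mathbb M$) with $\mathbb T$ via universality and homogeneity, obtain a surjective homomorphism $\Aut(\mathbb M)\to\Aut(\mathbb T)$ from the lifting lemma, and push forward the dense locally finite subgroup supplied by coherent EPPA for antipodal metric spaces. The only point worth noting is that in the back-and-forth for surjectivity one must reconcile successive lifts, which is possible because the only deck transformations of the double cover that are isometries are the identity and the global antipodal flip; both you and the paper leave this implicit.
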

In order to prove this, we first describe how to get a two-graph from an antipodal metric space, which is again a well-known construction~\cite{Cameron1999,Seidel1973}.

\begin{figure}                                                                  
\centering                                                                      
\includegraphics{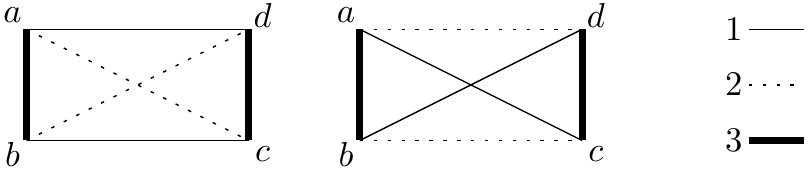}                                                
\caption{Two possible antipodal quadruples for choice of $a,b,c,d$, $d_\str{A}(a,b)=d_\str{A}(c,d)=3$.}
\label{fig:antipodalquadruple}                                                  
\end{figure}
In an antipodal metric space $\str{A}$, every quadruple of distinct vertices $a,b,c,d$ such that $d_\str{A}(a,b)=d_\str{A}(c,d)=3$
(a pair of edges of label 3) induces one of the two (isomorphic) subspaces depicted in Figure~\ref{fig:antipodalquadruple} --- we call these \emph{antipodal quadruples}. However, three edges with label $3$ can induce two non-isomorphic structures; the edges of length 1 either form two triangles, or one 6-cycle (see Figure~\ref{fig:doublecovers}). This motivates the following correspondence:
\begin{figure}                                                                  
\centering                                                                      
\includegraphics{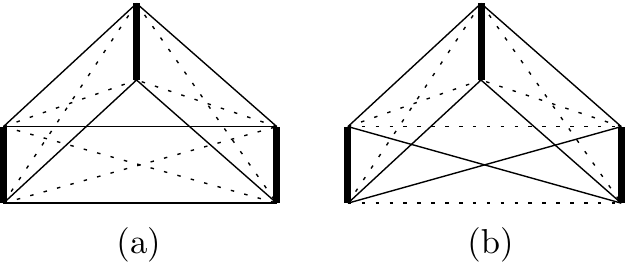}                                                
\caption{Two non-isomorphic antipodal metric spaces with 6 vertices.}
\label{fig:doublecovers}                                                  
\end{figure}

\begin{definition}[From antipodal spaces to two-graphs]
Let $\str A$ be an antipodal metric space. Let $M$ be the set of all edges of $\str A$ of length 3 (thus, $|M|=\frac{|A|}{2}$ if $\str A$ is finite). Define $T(\str A)$ to be the 3-uniform hypergraph on vertex set $M$ where $\{a,b,c\}$ is a hyperedge if and only if the substructure of $\str A$ induced on the edges $a,b,c$ is isomorphic to that depicted in Figure~\ref{fig:doublecovers}~(b). 
\end{definition}
It is straightforward to verify that $T(\str A)$ is a two-graph. Clearly there is a natural two-to-one map $\str{A} \to T(\str{A})$ and this induces a group homomorphism $\Aut(\str{A}) \to \Aut(T(\str{A}))$. 
It is also well-known that there is a converse to this construction (usually expressed in terms of double covers of compete graphs). Suppose $\str T$ is a two-graph. Let $\str G$ be a graph in the switching class of $\str T$ (so $T(\str{G}) = \str{T}$) and let $\str A$ be the antipodal metric space constructed in the proof of Theorem~\ref{thm:switchings}. Then $T(\str{A}) = \str{T}$ and the proof of Theorem~\ref{thm:switchings} shows that the map $\Aut(\str{A}) \to \Aut(\str{T})$ is surjective, as any automorphism of $\str{T}$ is a switching automorphism of $\str{G}$. More generally, a similar argument gives the following well-known fact:

\begin{lemma}\label{lem:lifting}
 If $\str{A}_1, \str{A}_2$ are antipodal metric spaces and $\beta\colon T(\str{A}_1) \to T(\str{A}_2)$ is an isomorphism of two-graphs, then there is an isomorphism $\alpha\colon \str{A}_1 \to \str{A}_2$ which induces $\beta$.
\end{lemma}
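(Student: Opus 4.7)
The plan is to build $\alpha$ by fixing a single basepoint in $A_1$, lifting the value of $\beta$ on its antipodal pair arbitrarily, and then propagating the choice to every other vertex by insisting that $\alpha$ preserve distances from that basepoint; the two-graph condition will then automatically enforce all remaining distances.

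Concretely, I would pick any $a_0 \in A_1$ with antipode $a_0'$, set $\{b_0, b_0'\} = \beta(\{a_0, a_0'\})$, choose $\alpha(a_0) := b_0$ arbitrarily and $\alpha(a_0') := b_0'$. For any other antipodal pair $\{a, a'\}$ of $\str{A}_1$, let $\{b, b'\} := \beta(\{a, a'\})$. Because $\{a_0, a_0', a, a'\}$ forms an antipodal quadruple (Figure~\ref{fig:antipodalquadruple}), exactly one of $a, a'$ is at distance $1$ from $a_0$ --- call it $a$ --- and, analogously, exactly one of $b, b'$ is at distance $1$ from $\alpha(a_0)$. Define $\alpha(a)$ to be that vertex and $\alpha(a')$ to be its antipode. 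This produces a bijection $A_1 \to A_2$ that lifts $\beta$, respects antipodal pairs, and preserves all distances from $a_0$ (and hence, by antipodality, all distances from $a_0'$).

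The main content of the proof will be to check that $d_{\str{A}_2}(\alpha(x), \alpha(y)) = d_{\str{A}_1}(x, y)$ also for $x, y \notin \{a_0, a_0'\}$ belonging to distinct antipodal pairs $\{a, a'\}$ and $\{b, b'\}$. I would consider the six-vertex induced substructure $\str{S} \subseteq \str{A}_1$ on $\{a_0, a_0', a, a', b, b'\}$ and its $\alpha$-image $\str{S}' \subseteq \str{A}_2$. The key observation is that a six-vertex antipodal metric space on three antipodal pairs has exactly the two isomorphism types shown in Figure~\ref{fig:doublecovers}, and the type is recorded precisely by whether $\{\{a_0,a_0'\}, \{a,a'\}, \{b,b'\}\}$ is a hyperedge of $T(\str{A}_1)$. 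Since $\beta$ is a two-graph isomorphism, $\str{S}$ and $\str{S}'$ realise the same type; combined with the fact that $\alpha$ already preserves antipodality and the four distances from $\{a_0,a_0'\}$ to $\{a, a', b, b'\}$, inspecting Figure~\ref{fig:doublecovers} shows that the four remaining distances between $\{a,a'\}$ and $\{b,b'\}$ are then forced to agree as well. I expect this last inspection to be the only real (but routine) obstacle: one must verify that in each of the two six-vertex configurations the ``pivot'' data from $a_0$ together with the two-graph type uniquely determines the cross-distances. With that established, $\alpha$ is an isomorphism of antipodal metric spaces inducing $\beta$, as required.
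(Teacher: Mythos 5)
Your proof is correct. The paper does not actually write out a proof of Lemma~\ref{lem:lifting}: it is presented as a well-known fact following from ``a similar argument'' to the preceding discussion, i.e.\ via the switching-class correspondence --- choose a graph $\str{G}_i$ in the switching class of $T(\str{A}_i)$ (equivalently, a pode of $\str{A}_i$), observe that $\beta$ becomes a switching isomorphism $\str{G}_1\to\str{G}_2$, and lift it to the double covers exactly as in the proof of Theorem~\ref{thm:switchings}. Your argument reaches the same conclusion by a direct verification inside the metric spaces: the basepoint $a_0$ plays the role of the choice of pode (your pode is $a_0$ together with its distance-$1$ neighbours), and the six-vertex inspection you defer to Figure~\ref{fig:doublecovers} is precisely the computation that the switching formalism packages away. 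That inspection does go through: once the distances from $a_0$ and $a_0'$ to the other four vertices are fixed, the antipodal-quadruple relations leave only the single bit $d(x,y)\in\{1,2\}$ for the two chosen distance-$1$ neighbours $x,y$ of $a_0$ undetermined, and this bit equals $1$ exactly when the $1$-edges form two triangles, i.e.\ exactly when the corresponding triple is a non-hyperedge; since $\beta$ preserves hyperedges and your $\alpha$ preserves the pivot data, all cross-distances are forced to agree. What your route buys is a self-contained proof needing no detour through graphs and switchings; what the paper's route buys is that the lifting is literally the mechanism already established for Theorem~\ref{thm:switchings}, so nothing new has to be checked. (A purely notational quibble: in your final paragraph $\{b,b'\}$ is recycled as a second antipodal pair of $\str{A}_1$ after having denoted a pair of $\str{A}_2$ earlier; rename one of them.)
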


We can now give:


\begin{proof}[Proof of Theorem~\ref{thm:groups}] Let $\str{T}$ be the two-graph $T(\mathbb{M})$. By Lemma~\ref{lem:lifting}, any isomorphism between finite substructures of $\str{T}$ lifts to an isomorphism between finite substructures of $\mathbb{M}$ and so, as $\mathbb{M}$ is homogeneous, this partial isomorphism is induced by an automorphism of $\mathbb{M}$. This shows that $\str{T}$ is homogeneous. The construction of an antipodal metric space from a two-graph shows that $\str{T}$ embeds every finite two-graph, and therefore $\str{T}$ is isomorphic to $\mathbb{T}$. So we have, again using Lemma~\ref{lem:lifting}, a surjective homomorphism $\alpha\colon \Aut(\mathbb{M}) \to \Aut(\mathbb{T})$. As already observed, there is a dense, locally finite subgroup $H$ of $\Aut(\mathbb{M})$. Then $\alpha(H)$ is a dense, locally finite subgroup of $\Aut(\mathbb{T})$.

%
\end{proof}

\section{Amalgamation property with automorphisms}
\label{sec:apa}
Let $\str{A}$, $\str{B}_1$ and $\str{B}_2$ be structures, $\alpha_1$ an
embedding of $\str{A}$ into $\str{B}_1$ and $\alpha_2$ an embedding of
$\str{A}$ into $\str{B}_2$. Then every structure $\str{C}$ with embeddings
$\beta_1\colon\str{B}_1 \to \str{C}$ and $\beta_2\colon\str{B}_2\to\str{C}$ such that
$\beta_1\circ\alpha_1 = \beta_2\circ\alpha_2$ is called an \emph{amalgamation}
of $\str{B}_1$ and $\str{B}_2$ over $\str{A}$ with respect to $\alpha_1$ and
$\alpha_2$.  Amalgamation is \emph{strong} if
$\beta_1(x_1)=\beta_2(x_2)$ if and only if $x_1\in \alpha_1(A)$ and $x_2\in
\alpha_2(A)$.

For simplicity, in the following definition we will assume that all the embeddings
in the definition of amalgamation are in fact inclusions.
\begin{definition}[APA]\label{defn:apa}
Let $\mathcal C$ be a class of finite structures. We say that $\mathcal C$ has the \emph{amalgamation property with automorphisms} (\emph{APA}) if for every $\str A, \str B_1, \str B_2 \in \mathcal C$ such that $\str A\subseteq \str B_1,\str B_2$ there exists $\str C\in \mathcal C$ which is an amalgamation of $\str B_1$ and $\str B_2$ over $\str A$, has $\str B_1,\str B_2\subseteq \str C$ and furthermore the following holds:

For every pair of automorphisms $f_1\in\Aut(\str B_1)$, $f_2\in \Aut(\str B_2)$ such that $f_i(A) = A$ for $i\in\{1,2\}$ and $f_1|_A = f_2|_A$, there is an automorphism $g\in \Aut(\str C)$ such that $g|_{B_i} = f_i$ for $i\in\{1,2\}$.
\end{definition}

In other words, APA is a strengthening of the amalgamation property which requires that every pair of automorphisms of $\str B_1$ and $\str B_2$ which agree on $\str A$ extends to an automorphism of $\str C$.

As was mentioned in the introduction, EPPA + APA imply the existence of ample generics~\cite{hodges1993b} and it turns out that most of the known EPPA classes also have APA. We now show that this is not the case for two-graphs.

\begin{figure}                                                                  
\centering                                                                      
\includegraphics{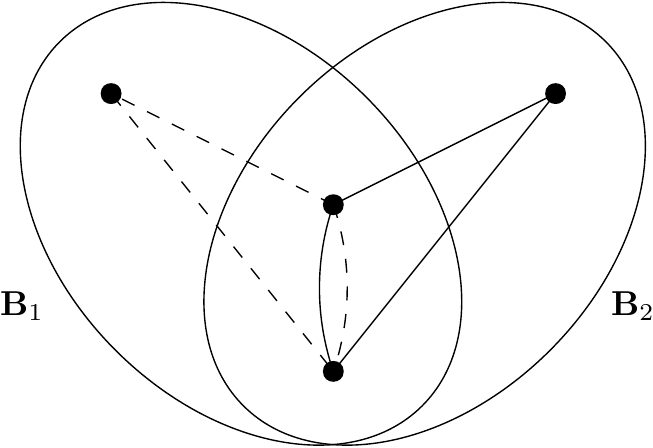}                                                
\caption{A failure of APA for two-graphs}
\label{fig:noapa}
\end{figure}
\begin{prop}
Let $\str A$ be the two-graph on two vertices, $\str B_1$ be the two-graph on three vertices with no hyper-edge and $\str B_2$ be the two-graph on three vertices which form a hyper-edge. It is possible to amalgamate $\str B_1$ and $\str B_2$ over $\str A$, but it is not possible to amalgamate them with automorphisms.
\end{prop}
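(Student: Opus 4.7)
The plan is to exhibit first an amalgam (so that the failure is truly of APA, not of amalgamation), and then a canonical pair of automorphisms that provably cannot be glued over any amalgam. Write $A=\{a_1,a_2\}$, $B_1=\{a_1,a_2,b_1\}$ (no hyperedge), and $B_2=\{a_1,a_2,b_2\}$ whose unique hyperedge is $\{a_1,a_2,b_2\}$.

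First I would observe that in any amalgam $\str C$ one must have $b_1\ne b_2$: otherwise $\str B_1$ and $\str B_2$ would both be the induced substructure of $\str C$ on the same three-element set, but they are non-isomorphic as two-graphs. Hence the substructure $\str C_0$ of $\str C$ induced on $\{a_1,a_2,b_1,b_2\}$ has four distinct vertices and is itself a two-graph extending both $\str B_1$ and $\str B_2$. The two-graph parity axiom on its single $4$-subset, together with the two triples already fixed, forces exactly one of $\{a_1,b_1,b_2\}$ and $\{a_2,b_1,b_2\}$ to be a hyperedge of $\str C_0$. Either choice gives a valid $4$-vertex amalgam, so the amalgamation property itself holds.

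Next I would define $f_1\in\Aut(\str B_1)$ and $f_2\in\Aut(\str B_2)$ to be the involutions that swap $a_1\leftrightarrow a_2$, fixing $b_1$ and $b_2$ respectively. Both are automorphisms (trivially for $\str B_1$; and the unique hyperedge of $\str B_2$ is setwise stabilised), both fix $A$ setwise, and they agree on $A$. Suppose for contradiction some $g\in\Aut(\str C)$ restricts to $f_i$ on $\str B_i$ for $i=1,2$; then $g$ swaps $a_1\leftrightarrow a_2$ and fixes $b_1,b_2$, hence swaps the two triples $\{a_1,b_1,b_2\}$ and $\{a_2,b_1,b_2\}$. But by the previous paragraph exactly one of these is a hyperedge in the induced two-graph on $\{a_1,a_2,b_1,b_2\}$, so $g$ cannot preserve the hyperedge relation -- contradiction.

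The main (and essentially only) subtlety is that the amalgam $\str C$ may a priori be larger than four vertices, or be non-strong. Both are handled uniformly by passing to the induced substructure on $\{a_1,a_2,b_1,b_2\}$ inside any amalgam: non-strongness is ruled out by the $\str B_1\not\cong \str B_2$ observation above, and the two-graph axiom applied to the unique $4$-subset gives the asymmetry that breaks any joint extension of $f_1$ and $f_2$, regardless of how large or how wild $\str C$ is.
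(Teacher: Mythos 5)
Your proof is correct and follows essentially the same route as the paper: both use the swap of the two $\str A$-vertices as the pair of automorphisms, and both derive the contradiction from the two-graph parity axiom on the four-element set, which forces exactly one of the two mixed triples to be a hyperedge while the putative joint extension would have to exchange them. The only (welcome) extra care you take is explicitly ruling out a non-strong amalgam identifying the two new points, which the paper sidesteps by working with inclusions.
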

\begin{proof}
For convenience, we name the vertices as in Figure~\ref{fig:noapa}: $A=\{u,v\}$, $B_1 = \{u,v,x_1\}$ and $B_2= \{u,v,x_2\}$. For $i\in \{1,2\}$ let $f_i$ be the automorphism of $\str B_i$ such that $f_i(x_i) = x_i$, $f_i(u)=v$ and $f_i(v)=u$. Clearly $f_1$ and $f_2$ agree on $\str A$.

Consider the amalgamation problem for $\str B_1$ and $\str B_2$ over $\str A$ (with inclusions and with automorphisms $f_1, f_2$) and assume for a contradiction that there is $\str C\in\mathcal T$ and an automorphism $g$ of $\str C$ as in Definition~\ref{defn:apa}. By the definition of $\mathcal T$, we get that there has to be an even number of triples in $\str C$ on $\{u,v,x_1,x_2\}$ and since we know that $\{u,v,x_1\}$ is not a triple and $\{u,v,x_2\}$ is a triple, there actually have to be precisely two triples on $\{u,v,x_1,x_2\}$. Therefore, exactly one of $\{u,x_1,x_2\}$ and $\{v,x_1,x_2\}$ has to form a triple in $\str C$. But this means that $g$ is not an automorphism (as $g$ fixes $x_1$ and $x_2$ and swaps $u$ and $v$), a contradiction.

On the other hand, if we only want to amalgamate $\str B_1$ and $\str B_2$ over $\str A$ (not with automorphisms), then this is clearly possible.
\end{proof}

\begin{remark}\label{rem:whynoapa}
In the introduction we mentioned that there are also some pathological examples with EPPA but not APA:
\begin{enumerate}
\item Let $\str M$ be the two-vertex set with no structure. Its age consists of the empty set, the one-element set and $\str M$. Consider the amalgamation problem for $\str A$ the empty set and $\str B_1=\str B_2=\str M$. The amalgam has to be $\str M$ again. But then it is impossible to preserve all four possible pairs of automorphisms of $\str B_1$ and $\str B_2$.

This phenomenon clearly happens because this is not a strong amalgamation class (and is exhibited by other non-strong amalgamation classes) and, indeed, disappears when we only consider closed structures.

\item Let now $\str M$ be the disjoint union of two infinite cliques, that is, an equivalence relation with two equivalence classes and consider its age. Let $\str A$ be the empty structure and $\str B_1 = \str B_2$ consist of two non-equivalent vertices. This amalgamation problem, again, does not have an APA-solution, because one needs to decide which pairs of vertices will be equivalent and this cannot preserve all four pairs of automorphisms.

This generalises to situations where the homogeneous structure has a definable equivalence relation with finitely many equivalence classes (or, more generally, of finite index).

However, the reason here is that the equivalence classes are algebraic in a quotient, i.e. a similar reason as in the previous point. One can either require the amalgamation problem to specify which classes go to which ones or, equivalently, consider an expansion which \emph{weakly eliminates imaginaries} and the problem disappears.
\end{enumerate}
While these two examples point out that, at least from the combinatorial point of view, one needs a more robust definition for APA, two-graphs seem to \textit{innately not have APA}.
\end{remark}

\section{Ramsey expansion of two-graphs}
\label{sec:ramsey}
As was pointed out recently, the methods for proving EPPA and the Ramsey property share many similarities, see e.g.~\cite{Aranda2017}. The standard strategy is to study the \emph{completion problem} for given classes and their expansions, see~\cite{Aranda2017,Evans3,Konecny2018b,Hubicka2017sauer}. EPPA is usually a corollary of one of the steps towards finding a Ramsey expansion.

The situation is very different for two-graphs. As we shall observe in this section, the Ramsey question can be easily solved using standard techniques, while for EPPA this is not the case (see Remark~\ref{rem:noherwig}). This makes two-graphs an important example of the limits of the current methods and shows that the novel approach of this paper is in fact necessary.

We now give the very basic
definitions of the structural Ramsey theory. 

A class $\K$ of structures is {\em Ramsey} if for every $\str{A},\str{B}\in \K$ there
exists $\str{C}\in \K$ such that for every colouring of copies of $\str{A}$ in $\str{C}$
there exists a copy of $\str{B}$ in $\str{C}$ that is monochromatic.
(By a {\em copy} of $\str{A}$ in $\str{C}$ we mean any substructure of $\str{C}$ isomorphic to $\str{A}$.) This statement is abbreviated as $\str C\longrightarrow (\str B)_k^\str A$.

If class $\mathcal K$ has {\em joint embedding property} (which means that for
every $\str{A}_1, \str{A}_2\in \mathcal K$ there is $\str{C}\in \mathcal K$ which contains a copy of both $\str{A}_1$ and $\str{A}_2$)
then EPPA of $\mathcal K$ implies amalgamation.  It is well known that Ramsey
property also implies amalgamation
property~\cite{Nevsetvril1989a,Nevsetril2005} under the assumption of joint embedding.

Every Ramsey class consists of rigid structures, i.e. structures with no
non-trivial automorphism. The usual way to establish rigidity is to extend the
language (in a model-theoretical way) by an additional binary relation $\leq$ which fixes the
ordering of vertices.  It is thus a natural question whether the class
$\overrightarrow{\mathcal T}$ of all two-graphs with a linear ordering of
the vertices is Ramsey.  We now show that the answer is negative and to do it, we need the following statement.
\begin{prop}
\label{prop:twographexp}
For every two-graph $\str{A}$ there exists a two-graph $\str{B}$
such that every graph in the switching class of $\str{B}$ contains a
copy of every graph in the switching class of $\str{A}$.
\end{prop}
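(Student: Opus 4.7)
The plan is to construct $\str{B}$ as $T(\str{H})$ for a graph $\str{H}$ coming from a vertex-Ramsey argument. I would first list representatives $\str{G}_1, \ldots, \str{G}_m$ of the isomorphism types of graphs in the switching class of $\str{A}$ and form their disjoint union $\str{G}^\ast = \str{G}_1 \sqcup \cdots \sqcup \str{G}_m$. Then I invoke the classical vertex-Ramsey theorem for induced subgraphs of finite graphs (a special case of the Ne\v set\v ril--R\"odl induced Ramsey theorem): there exists a finite graph $\str{H}$ such that for every $2$-colouring $\chi\colon V(\str{H})\to\{0,1\}$ there is a monochromatic induced copy of $\str{G}^\ast$ in $\str{H}$.

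Set $\str{B} := T(\str{H})$. To check that $\str{B}$ has the required property, take any graph $\str{G}'$ in the switching class of $\str{B}$. Because two graphs have isomorphic associated two-graphs exactly when they are switching-equivalent, I may assume $\str{G}' = \str{H}_S$ for some $S \subseteq V(\str{H})$. The characteristic function of $S$ is a $2$-colouring of $V(\str{H})$, so the Ramsey property yields a vertex set $W \subseteq V(\str{H})$ that is monochromatic (i.e.\ $W \subseteq S$ or $W \cap S = \emptyset$) with $\str{H}[W] \cong \str{G}^\ast$. In particular $W$ contains subsets $V_0^{(i)}$ with $\str{H}[V_0^{(i)}] \cong \str{G}_i$ for each $i \in \{1, \ldots, m\}$.

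The key combinatorial observation I would use is that for any $V_0 \subseteq V(\str{H})$ one has $\str{H}_S[V_0] = (\str{H}[V_0])_{S \cap V_0}$, since Seidel switching only flips edges between $S$ and its complement. Since each $V_0^{(i)}$ sits inside the monochromatic set $W$, the intersection $S \cap V_0^{(i)}$ is either empty or all of $V_0^{(i)}$, and switching at either has no effect. Thus $\str{H}_S[V_0^{(i)}] = \str{H}[V_0^{(i)}] \cong \str{G}_i$, so $\str{G}'$ contains an induced copy of every $\str{G}_i$, which is what was required.

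The main obstacle is essentially external: one needs the vertex-Ramsey theorem for induced subgraphs of finite graphs. This is a classical result, and once it is on the table the argument is routine. The conceptual content is that Seidel switching at $S$ has no effect on any induced subgraph whose vertices all lie on one side of the partition $(S, V(\str{H}) \setminus S)$, so it cannot destroy the copies of the $\str{G}_i$ provided by vertex-Ramsey.
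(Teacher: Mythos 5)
Your proof is correct and follows essentially the same route as the paper: take the disjoint union of (representatives of) the switching class of $\str A$, apply the vertex-Ramsey theorem for induced subgraphs (the paper cites Folkman's theorem here), and observe that a switching by $S$ acts trivially on any induced subgraph lying entirely inside one colour class of the partition $(S, H\setminus S)$. The only cosmetic difference is that the paper phrases the monochromatic-copy step directly in terms of the colouring induced by $S$ rather than isolating the identity $\str{H}_S[V_0] = (\str{H}[V_0])_{S\cap V_0}$, but the content is identical.
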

\begin{proof}
Denote by $\str{G}$ the disjoint union of all graphs in the switching class of $\str{A}$.
Now let $\str{H}$ be a graph such that every colouring of vertices
of $\str{H}$ by 2 colours induces a monochromatic copy of $\str{G}$ (that is,
$\str H$ is vertex-Ramsey for $\str G$) --- it exists by a theorem of Folkman~\cite{folkman1970,Nevsetvril1976b,Nevsetvril1977}).

Every graph $\str{H}'$ in the
switching class of $\str{H}$ induces a colouring of
vertices of $\str{H}$ by two colours: $\str H'$ being in the switching class of
$\str H$ means that there is a set $S\subseteq H$ such that $\str H' = \str H_S$, the
colour classes are then $S$ and $H\setminus S$ respectively.

By the construction of $\str H$ we find a copy $\widetilde{\str{G}}\subseteq \str H$ of $\str G$
which is monochromatic with respect to this colouring. This however implies that 
the graphs induced by $\str{H}$ and $\str{H}'$ on the set $\widetilde{G}$ are isomorphic
and thus $\str H'$ indeed contains every graph in the switching class of $\str A$. 
Therefore, we can put $\str{B}$ to be the two-graph associated to $\str{H}$. 
\end{proof}
\begin{corollary}
The class $\overrightarrow{\mathcal T}$ is not Ramsey for colouring pairs of vertices.
\end{corollary}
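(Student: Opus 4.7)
The plan is to argue by contradiction, using Proposition~\ref{prop:twographexp} to produce a ``sufficiently rich'' two-graph whose Ramsey behaviour on pairs can be spoiled by the graph-from-two-graph construction from the proof of Theorem~\ref{thm:twographeppa}. Fix a two-graph $\str A_0$ whose switching class contains a graph that is neither complete nor empty --- for concreteness take $\str A_0=T(P_4)$, so $P_4$ itself lies in the switching class. Apply Proposition~\ref{prop:twographexp} to obtain a two-graph $\str B_0$ such that every graph in the switching class of $\str B_0$ contains an induced copy of every graph in the switching class of $\str A_0$, and equip $\str B_0$ with an arbitrary linear order to get $\str B\in\overrightarrow{\mathcal T}$. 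Let $\str A$ be the unique ordered two-graph on $2$ vertices. Assuming $\overrightarrow{\mathcal T}$ is Ramsey for pairs, there exists $\str C\in\overrightarrow{\mathcal T}$ with $\str C\longrightarrow(\str B)_2^{\str A}$; the task is then to exhibit a $2$-colouring of pairs of $\str C$ with no monochromatic copy of $\str B$.

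The colouring is the natural one coming from the minimum vertex. Let $x$ be the minimum vertex of $\str C$ and, mimicking the proof of Theorem~\ref{thm:twographeppa}, define a graph $\str G$ on vertex set $C$ by declaring $\{y,z\}$ to be an edge precisely when $x\notin\{y,z\}$ and $\{x,y,z\}$ is a hyperedge of $\str C$; one checks $T(\str G)=\str C$. Colour a pair of $\str C$ by $1$ if it is an edge of $\str G$ and by $0$ otherwise, so that in particular every pair containing $x$ gets colour $0$.

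Now suppose $\str B'\subseteq\str C$ is a monochromatic copy of $\str B$. If the colour is $1$, then no pair of $B'$ can contain $x$, so $x\notin B'$ and $\str G|_{B'}=K_{|B|}$; if the colour is $0$, then $\str G|_{B'}$ is the empty graph on $|B|$ vertices. Using $T(\str G|_{B'})=T(\str G)|_{B'}=\str C|_{B'}\cong\str B_0$, the graph $\str G|_{B'}$ is isomorphic to a graph in the switching class of $\str B_0$, and hence by Proposition~\ref{prop:twographexp} contains an induced copy of every graph in the switching class of $\str A_0$, and in particular an induced copy of $P_4$. But neither $K_{|B|}$ nor $\overline{K_{|B|}}$ contains $P_4$ as an induced subgraph, contradiction.

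I do not foresee a substantive obstacle: each step is a routine unpacking of the two-graph/switching-class correspondence, and the content is essentially packaged into Proposition~\ref{prop:twographexp}. The conceptual point is that a $2$-colouring of pairs of an ordered two-graph amounts to choosing a single graph in its switching class, and Proposition~\ref{prop:twographexp} rules out the possibility that, inside a large enough $\str C$, this choice looks uniformly complete or empty on some copy of $\str B$.
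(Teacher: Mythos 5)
Your proof is correct and follows essentially the same route as the paper: assume a Ramsey witness $\str C$ exists, pick a graph in its switching class (the paper takes an arbitrary one, you construct one explicitly from the minimum vertex), colour pairs by edge/non-edge, and use Proposition~\ref{prop:twographexp} to show a monochromatic copy of $\str B$ would force a complete or empty graph to contain a graph with both an edge and a non-edge. The only cosmetic difference is that you instantiate the seed two-graph as $T(P_4)$ where the paper allows any graph with an edge and a non-edge.
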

\begin{proof}
Let $\str A$ be the two-graph associated to an arbitrary graph containing both
an edge and a non-edge, and let $\str B$ be the two-graph given by
Proposition~\ref{prop:twographexp} for $\str A$. Let $\overrightarrow{\str B}$ be an arbitrary
linear ordering of $\str B$.

Assume that there exists an ordered two-graph $\overrightarrow{\str{C}}$ such that
$$\overrightarrow{\str{C}}\longrightarrow(\overrightarrow{\str{B}})^{\overrightarrow{\str{E}}}_2$$
where $\overrightarrow{\str{E}}$ is the unique ordered two-graph on 2 vertices. 

Let $\overrightarrow{\str I}$ be an arbitrary graph from the switching class of $\overrightarrow{\str C}$
(with the inherited order) and colour copies of $\overrightarrow{\str E}$ red if they correspond to an edge
of $\overrightarrow{\str I}$ and blue otherwise. By the construction of $\str B$ it follows
that there is no monochromatic copy of $\overrightarrow{\str B}$, a contradiction.
\end{proof}
Proposition~\ref{prop:twographexp} says that the expansion of two-graphs adding a particular graph from the switching
class has the so-called \emph{expansion property}. As a
consequence of the Kechris--Pestov--Todor\v cevi\' c correspondence~\cite{Kechris2005}, one gets
that every Ramsey expansion of $\mathcal T$ has to fix a particular representative of the
switching class (see e.g.~\cite{NVT14} for details, it is also easy to see this directly). On the other hand, expanding any two-graph by a linear
order and a particular graph from the given switching class is a Ramsey expansion by the
Ne\v set\v ril-R\"odl theorem~\cite{Nevsetvril1977b}.

\section{Remarks}
\begin{remark}\label{rem:noherwig}
The by-now-standard strategy for proving EPPA for class $\mathcal C$ in, say, a relational language $L$ may be summarized as follows:
\begin{enumerate}
\item Assume that, every pair of vertices of every structure in $\mathcal C$ is in some relation. If it is not, we can always add a new binary relation to $L$ and put all pairs into the relation.
\item\label{noherwig:2} Study the class $\mathcal C^-$ which consists of all $L$-structures $\str A^-$ such that there is $\str A\in \mathcal C$ which is a \emph{completion} of $\str A^-$, that is, $\str A^-$ and $\str A$ have the same vertex set $A$ and there is $X\subset \mathcal P(A)$, a subset of the powerset of $A$, such that if $Y\in X$ and $Z\subseteq Y$, then $Z\in X$, and for each relation $R\in L$ it holds that $R^{\str A^-} = R^{\str A}\cap X$.
\item\label{noherwig:3} Find a finite family of $L$-structures $\mathcal F$ such that $\mathcal C^-$ is precisely $\Forb(\mathcal F)$, that is, the class of all finite $L$-structures $\str B$ such that there is no $\str F\in \mathcal F$ with a homomorphism to $\str B$.
\item Prove that in fact for every $\str A^-\in \mathcal C^-$ there is $\str A\in \mathcal C$ which is its \emph{auto\-morphism-preserving completion}, that is, $\str A^-$ can be obtained from $\str A$ as in point~\ref{noherwig:2} and furthermore $\str A^-$ and $\str A$ have the same automorphisms.
\item Use the Herwig--Lascar theorem~\cite{herwig2000} omitting homomorphisms from $\mathcal F$ to get EPPA-witnesses in $\Forb(\mathcal F)$.
\item Take the automorphism-preserving completion of the witnesses to get EPPA-witnesses in $\mathcal C$ and thus prove EPPA for $\mathcal C$.
\end{enumerate}
In various forms, this strategy was applied, for example, in~\cite{solecki2005,Conant2015,Aranda2017,Konecny2018b,Hubicka2017sauer}.
See also~\cite{Hubicka2016} where the notion of completions was introduced.

As we have seen in Section~\ref{sec:apa}, $\mathcal T$ does not admit automorphism-preserving completions (because APA is a weaker property). One can also prove (and it will appear elsewhere), using the negative result of Proposition~\ref{prop:twographexp} and Theorem~2.11 of~\cite{Hubicka2016}, that $\mathcal T$ cannot be described by finitely many forbidden homomorphisms (hence in particular there is no finite family $\mathcal F$ satisfying point \ref{noherwig:3} above). This we believe is the first time the Ramsey techniques have been used to prove a negative result for EPPA.
\end{remark}

\begin{remark}
$\mathcal T$ is one of the five reducts of the random graph~\cite{Thomas1991}. Besides $\mathcal T$, the random graph itself and the countable set with no structure, the remaining two corresponding automorphism groups can be obtained by adding an isomorphism between the random graph and its complement and an isomorphism between the generic two-graph and its complement respectively.

By a similar argument, one can prove that the ``best'' Ramsey expansion (that is, with the expansion property) of these structures is still the ordered random graph. On the other hand, EPPA for these two classes is an open problem (we conjecture that neither of these two classes has EPPA).
\end{remark}

\begin{remark}
Theorem~\ref{thm:switchings} implies the following. For every graph $\str G$ there exists an EPPA-witness $\str H$ with the property that the two-graph associated to $\str H$ is an EPPA-witness for the two-graph associated for $\str G$, in other words, it implies that the class of all graphs and two-graphs respectively are a non-trivial positive example for the following question.
\end{remark}
\begin{question}
For which pairs of classes $\mathcal C$, $\mathcal C^-$ such that $\mathcal C^-$ is a reduct of $\mathcal C$ does it hold that for every $\str A\in \mathcal C$ there is $\str B\in \mathcal C$ such that $\str B$ is an EPPA-witness for $\str A$ (in $\mathcal C$) and furthermore if $\str A^-$ and $\str B^-$ are the corresponding reducts in $\mathcal C^-$ then $\str B^-$ is an EPPA-witness for $\str A^-$ (in $\mathcal C^-$)?
\end{question}

\begin{remark}
As was already mentioned, ample generics are usually proved to exist by showing the combination of EPPA and APA. Siniora in his thesis~\cite{Siniora2} asked if two-graphs have ample generics. This question still remains open, although we conjecture that it is not the case (ample generics are equivalent to having the so-called \emph{weak amalgamation property for partial automorphisms} and the \emph{joint embedding property for partial automorphisms} and it seems that the reasons for two-graphs not having APA are fundamental enough to also hold in the weak amalgamation context).\footnote{Added August 2019: in joint work with P. Simon, the authors have shown that this conjecture holds.}
\end{remark}

\section*{Acknowledgment}
We would like to thank the anonymous referee for their helpful comments.


\bibliography{ramsey.bib}

\end{document}